\newcommand{\f}{\frac}
\newcommand{\om}{\Omega}
\newcommand{\omn}{\omega_n}
\newtheorem{thm}{Theorem}
\newtheorem{prop}{Proposition}
\begin{document}

\title[Equivalent and attained version of Hardy's inequality in $\mathbb{R}^n$]{Equivalent and attained version\\ of Hardy's inequality in $\mathbb{R}^n$}
\author{D.~Cassani \and B.~Ruf \and C.~Tarsi }

\address[Daniele Cassani]{\newline\indent Dip. di Scienza e Alta Tecnologia
\newline\indent
Universit\`{a} degli Studi dell'Insubria
\newline\indent and
\newline\indent RISM--Riemann International School of Mathematics
\newline\indent via G.B.~Vico 46, 21100 - Varese, Italy \newline\indent\texttt{daniele.cassani@uninsubria.it}
}

\address[Bernhard Ruf and Cristina Tarsi]{\newline \indent Dip. di Matematica \newline\indent Universit\`a degli Studi di Milano\newline\indent via C.~Saldini 50, 20133 - Milano, Italy\newline\indent \texttt{bernhard.ruf@unimi.it}\newline\indent\texttt{cristina.tarsi@unimi.it}
}

\date{\today}
\begin{abstract}
We investigate connections between Hardy's inequality in the whole space $\mathbb{R}^n$ and embedding inequalities for
Sobolev-Lorentz spaces. In particular, we complete previous results due to \cite[Alvino]{Alv1} and \cite[Talenti]{Ta} by establishing optimal embedding inequalities for the Sobolev-Lorentz quasinorm $\|\nabla\,\cdot\,\|_{p,q}$ also in the range $p < q<\infty$, which remained essentially open since \cite{Alv1}.
\par \smallskip \noindent
Attainability of the best embedding constants is also studied, as well as the
limiting case when $q=\infty$. Here, we surprisingly discover that the Hardy inequality is equivalent to the corresponding Sobolev-Marcinkiewicz embedding inequality. Moreover, the latter turns out to be attained by the so-called ``ghost'' extremal functions of \cite[Brezis-Vazquez]{BV}, in striking contrast with the Hardy inequality, which is never attained. In this sense, our functional approach seems to be more natural than the classical Sobolev setting, answering a question raised in \cite{BV}.
\end{abstract}
\maketitle

\section{Introduction}
\noindent The classical Hardy inequality for smooth compactly supported functions in $\Omega\subseteq\mathbb{R}^n$ and for $1<p<n$, reads as follows
\begin{equation}\label{Hi}\tag{$H_p$}
\left(\f{n-p}{p}\right)^p\int_{\Omega}\f{|u|^p}{|x|^p}dx\leq
\int_{\Omega} |\nabla u|^p dx
\end{equation}
where the constant in the left hand side of \eqref{Hi} is sharp for any sufficiently smooth domain containing the origin. Actually, Hardy proved in 1925 the one dimensional version of \eqref{Hi},
see \cite{KMP,D} for a historical insight into the subject. The original result has been extended and generalized by many authors in several
directions which break through different aspects of Analysis, Geometry and PDE, among which we mention
\cite{Ma,OK,M,BM,BMS,BV,CF,FT,GM,MMP}.

\par \smallskip
While much  progress has been achieved in understanding \eqref{Hi} and its  generalizations, a basic question raised by Brezis and Vazquez in \cite{BV} on the attainability of the best constant in \eqref{Hi} has not been given a full answer yet. Indeed, in \cite{BV,Ma}  it was found that additional lower order terms are admissible on the left hand side of \eqref{Hi},
as long as $\Omega$ stays bounded, and an extensive literature has been devoted to searching for such {\it remainder terms}
in Hardy and Hardy-type inequalities, see \cite{FT,CF} and references therein. This phenomenon yields an
obstruction to the attainability of the best constant in \eqref{Hi}, provided the domain $\Omega$ contains the origin.

When $\Omega=\mathbb{R}^n$ the existence of a suitable class of remainders has been recently established in \cite{CF, SanoTakahashi}, see also \cite{GhoussoubMoradifam}. As mentioned, the presence of remainders prevents that the Hardy inequality is attained, and we refer also to the recent papers \cite{pincho1,pincho2} for a deeper understanding of this phenomenon. In particular, the Euler-Lagrange equation corresponding to the equality case in Hardy's inequality has no solution in the Sobolev space $\mathcal{D}^{1,p}(\mathbb{R}^n)$, defined as the completion of $C_c^\infty(\mathbb R^N)$ with respect to the norm $\|\nabla \, \cdot \, \|_p$,  however it is explicitly solved by a class of functions which do not belong to  this space. The lack of a proper function space setting was  pointed out in \cite{BV} and has inspired  our work since the very beginning.
\par \smallskip
Another interesting aspect of \eqref{Hi} is its equivalence to the optimal Sobolev embedding for the space
$\mathcal{D}^{1,p}(\mathbb{R}^n)$ in the context of Lorentz spaces, namely
\begin{equation}\label{A_in}\tag{$A_{p,p}$}
 \|u\|_{p^\ast, p}\leq S_{n,p}\|\nabla u\|_p
\end{equation}
which was obtained by Alvino in \cite{Alv1}, see also Peetre \cite{P}.
The constant
$$
S_{n,p}=\f{p}{n-p}\f{\left[\Gamma(1+n/2)\right]^{\f{1}{n}}}{\sqrt{\pi}}=
\f{p}{n-p}\,\omn^{-\f 1n}
$$
is  best possible and the embedding given by \eqref{A_in} is optimal in regards to the target space $L^{{p^*},p}$ which is  smallest
 among all rearrangements invariant spaces \cite{BS,EKP} ($\Gamma$ denotes the standard Euler Gamma function and
 $\omega_n$ stands for the measure of the unit sphere in $\mathbb{R}^n$).
  In this sense, \eqref{A_in} yields the optimal version of the Sobolev embedding theorem.

\par \smallskip
 The equivalence between \eqref{Hi} and \eqref{A_in} is a consequence of the P\'olya-Szeg\"o principle and
 the Hardy-Littlewood inequality by which the left hand side of \eqref{Hi} does not increase under radially
decreasing symmetrization and it is equal to the left hand side of \eqref{A_in} when $u$ is radially decreasing.
\par \smallskip
\noindent Alvino in \cite{Alv1} proved actually the following inequalities
\begin{equation}\label{A_in2}\tag{$A_{p,q}$}
\|u\|_{p^*,q}\leq \f{p}{n-p}\,\omn^{-\f 1n} \|\nabla u\|_{p,q},
\quad 1\leq p<n
\end{equation}
related to the Sobolev-Lorentz embedding
\begin{equation}\label{emb}
\mathcal{D}_H^1L^{p,q}(\mathbb R^n) \hookrightarrow
L^{p^*,q}(\mathbb R^n)
\end{equation}
with the restriction
$$
1\leq q \leq p\, ,
$$
see also \cite{CRT}. The homogeneous Sobolev-Lorentz space
$\mathcal{D}_H^1L^{p,q}(\mathbb R^n)$ is obtained as the closure
of smooth compactly supported functions with respect to the
Lorentz quasi-norm $\|\nabla\,\cdot\,\|_{p,q}$. Note that the
validity of the embedding \eqref{emb}  for $1\leq q\leq +\infty$
is well known in the literature of interpolation theory:
more direct and short proofs can be found in  \cite{Ta, ALT}.

\par \smallskip \noindent
 Let us point out that the embedding
constant in \eqref{A_in2} is sharp, and it does not depend on the
second Lorentz index $q$. Moreover, up to a normalizing factor, it
turns out to be the Hardy constant.

 \vspace*{0.3cm}

\subsection*{Main results}
$ $
\par \smallskip
\noindent Our first goal is to extend the validity of the embedding inequality \eqref{A_in} to the values $p < q\leq \infty$, still preserving the optimal constant, thus completing the results of Alvino \cite{Alv1} and Talenti \cite{Ta} to the whole range $1 \le q \le \infty$.
\par \smallskip
In the case $q=\infty$ the functional setting is somewhat delicate, as no Meyer-Serrin type result
holds for the corresponding homogeneous spaces. Thus, let us define for $1\leq p<\infty$ and $1\leq q \leq \infty$
the space
$$
\mathcal{D}_W^1L^{p,q}(\mathbb R^n):=\left\{u\in L^{p^*,q}(\mathbb R^n): |\nabla
u| \in L^{p,q}(\mathbb R^n)\right\}\ .
$$
Then it turns out that for $q<\infty$
\begin{equation*}
\mathcal{D}_H^1L^{p,q}(\mathbb R^n)=
\mathcal{D}_W^1L^{p,q}(\mathbb R^n)=: \mathcal{D}^1L^{p,q}(\mathbb
R^n)
\end{equation*}
whereas for $q=\infty$ one has
$$\mathcal{D}_H^1L^{p,\infty}(\mathbb R^n)\subsetneq
\mathcal{D}_W^1L^{p,\infty}(\mathbb R^n),
$$
see \cite{C} and Section \ref{preliminaries} for more details.
\par \bigskip
\begin{thm}\label{thm generAlv}
Let $1\leq p<n$, $p< q\leq \infty$. Then the   following
inequality holds for any $u\in \mathcal{D}_W^1L^{p,q}(\mathbb R^n)$
  \begin{equation}\label{SobLorineq-R^n}\tag{$A_{p,q}$}
    \|u\|_{p^{\ast},q}\leq \f{p}{n-p}\,\omn^{-\f 1n} \|\nabla
    u\|_{p,q}
  \end{equation}
where the constant $\f{p}{n-p}\,\omn^{-\f 1n}$ is sharp.
\end{thm}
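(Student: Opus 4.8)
The plan is to strip away the geometry by symmetrization, reduce \eqref{SobLorineq-R^n} to a sharp one–dimensional weighted Hardy inequality on the half–line, and prove the latter by recognizing the underlying operator, after a logarithmic change of variables, as a convolution, so that Young's inequality produces the optimal constant \emph{uniformly in $q$}. Since $L^{p^{\ast},q}$ is rearrangement invariant, $\|u\|_{p^{\ast},q}=\|u^{*}\|_{p^{\ast},q}$, and by the P\'olya--Szeg\"o principle for the Lorentz quasi--norms (Section~\ref{preliminaries}) one has $\|\nabla u^{*}\|_{p,q}\le\|\nabla u\|_{p,q}$; hence it is enough to prove \eqref{SobLorineq-R^n} for radially symmetric, non--increasing $u$. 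The case $q=\infty$ needs some care, because $C_{c}^{\infty}$ is not dense in $\mathcal D_{W}^{1}L^{p,\infty}(\mathbb R^{n})$: there I would work directly with the spherically symmetric rearrangement of $u\in\mathcal D_{W}^{1}L^{p,\infty}$, or approximate $u$ by truncations whose gradients lie in $L^{p,q}$ with $q<\infty$, concluding by lower semicontinuity of the $(p^{\ast},\infty)$--quasi--norm along such truncations.

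For a radial non--increasing $u(x)=\phi(|x|)$, the substitution $s=\omn\rho^{\,n}$ gives $u^{*}(s)=\phi\big((s/\omn)^{1/n}\big)$, and setting $G(\sigma):=-\phi'\big((\sigma/\omn)^{1/n}\big)\ge0$ one obtains
\[
u^{*}(s)=\frac{1}{n\,\omn^{1/n}}\int_{s}^{\infty}\sigma^{\frac1n-1}\,G(\sigma)\,d\sigma .
\]
Moreover $|\nabla u|$ on $\mathbb R^{n}$ and $G$ on $(0,\infty)$ are equimeasurable, so $\|\nabla u\|_{p,q}=\|G\|_{p,q}$; since $\tfrac{1}{p^{\ast}}+\tfrac1n=\tfrac1p$ and $\tfrac{p^{\ast}}{n\,\omn^{1/n}}=\tfrac{p}{n-p}\,\omn^{-1/n}$, the inequality \eqref{SobLorineq-R^n} is equivalent to
\[
\big\|\,\mathcal T G\,\big\|_{L^{p^{\ast},q}(0,\infty)}\le p^{\ast}\,\|G\|_{L^{p,q}(0,\infty)},\qquad \mathcal TG(s):=\int_{s}^{\infty}\sigma^{\frac1n-1}G(\sigma)\,d\sigma ,\quad p^{\ast}=\tfrac{np}{n-p},
\]
for all measurable $G\ge0$.

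To prove this one--dimensional inequality, note that $\mathcal TG$ is non--increasing, hence $(\mathcal TG)^{*}=\mathcal TG$ and $\|\mathcal TG\|_{p^{\ast},q}=\big(\int_{0}^{\infty}(s^{1/p^{\ast}}\mathcal TG(s))^{q}\,\tfrac{ds}{s}\big)^{1/q}$, which is a genuine weighted $L^q$ norm. Using $\tfrac1{p^{\ast}}+\tfrac1n=\tfrac1p$ again, a direct computation gives
\[
s^{1/p^{\ast}}\mathcal TG(s)=\int_{0}^{\infty}K\!\Big(\tfrac{s}{\sigma}\Big)\,\big(\sigma^{1/p}G(\sigma)\big)\,\tfrac{d\sigma}{\sigma},\qquad K(r):=r^{1/p^{\ast}}\mathbf 1_{(0,1)}(r),
\]
that is, $s^{1/p^{\ast}}\mathcal TG$ is the convolution of $K$ with $H(\sigma):=\sigma^{1/p}G(\sigma)$ on the group $\big((0,\infty),\tfrac{d\sigma}{\sigma}\big)\cong(\mathbb R,+)$. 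When $G$, hence $H$, is non--increasing, $\|H\|_{L^{q}(d\sigma/\sigma)}=\|G\|_{p,q}$, and Young's inequality yields
\[
\|\mathcal TG\|_{p^{\ast},q}=\|K\ast H\|_{L^{q}(ds/s)}\le\|K\|_{L^{1}(dr/r)}\,\|H\|_{L^{q}(d\sigma/\sigma)}=p^{\ast}\,\|G\|_{p,q},
\]
since $\|K\|_{L^{1}(dr/r)}=\int_{0}^{1}r^{1/p^{\ast}-1}\,dr=p^{\ast}$. This argument works for \emph{every} $1\le q\le\infty$, which is precisely why the best constant is independent of $q$.

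For $1\le q\le p$ the additional range is Alvino's result, so the three steps above conclude the proof. The delicate new point — and the reason the range $p<q\le\infty$ stayed open since \cite{Alv1} — is that for $q>p$ the replacement $G\rightsquigarrow G^{*}$ in Step~3 is no longer harmless, because the functional $\int(\sigma^{1/p}G)^{q}\tfrac{d\sigma}{\sigma}$ is not rearrangement invariant and may far exceed $\|G\|_{p,q}^{q}$ when $G$ is not monotone. To bypass this I would insert, before the passage to one dimension, a profile--convexification: replace the radial non--increasing $u$ by a radial function $\widetilde u$ with \emph{convex} profile satisfying $\widetilde u^{*}\ge u^{*}$ pointwise and $\|\nabla\widetilde u\|_{p,q}\le\|\nabla u\|_{p,q}$ — equivalently, replace $G$ by its non--increasing level function relative to the weight $\sigma^{\frac1n-1}$, which does not increase $\|\cdot\|_{p,q}$ and does not decrease $\mathcal TG$ — and then run Steps~2--3 on $\widetilde u$. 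Making this level--function estimate work \emph{with the sharp constant} for all $p<q\le\infty$, where $L^{p,q}$ is merely a quasi--norm, is the heart of the matter and the step I expect to be the main obstacle. Sharpness of $\tfrac{p}{n-p}\,\omn^{-1/n}$ is then seen through the ``ghost'' functions: for $q=\infty$ one checks directly that $u(x)=|x|^{-(n-p)/p}$ belongs to $\mathcal D_{W}^{1}L^{p,\infty}(\mathbb R^{n})$ and gives equality in \eqref{SobLorineq-R^n}; for $q<\infty$ the radial functions $u_{\varepsilon}$ associated with $G_{\varepsilon}(\sigma)=\sigma^{-1/p}\mathbf 1_{(\varepsilon,1/\varepsilon)}(\sigma)$ — two--sided truncations of $|x|^{-(n-p)/p}$ — satisfy $\|u_{\varepsilon}\|_{p^{\ast},q}/\|\nabla u_{\varepsilon}\|_{p,q}\to\tfrac{p}{n-p}\,\omn^{-1/n}$ as $\varepsilon\to0$, which is exactly the equality regime of Young's inequality in Step~3 (slowly varying $H$).
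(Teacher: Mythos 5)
Your route is genuinely different from the paper's, and it cleanly isolates why the constant is $q$‑independent: after the substitution $s=\omn|x|^n$, $s^{1/p^*}\mathcal T G(s)$ is literally a multiplicative convolution $K*H$ on $\big((0,\infty),\tfrac{d\sigma}{\sigma}\big)$ with $\|K\|_{L^1(dr/r)}=p^*$, and Young's inequality gives the constant $p^*$ for \emph{every} $q\in[1,\infty]$ in one stroke. The paper instead proceeds by the nonlinear change of function $v=[u(|x|^{p/q})]^{q/p}$, which transports the $(p,q)$‑problem back to Alvino's already‑known $(p,p)$‑case, with a Hardy--Littlewood step inside the gradient computation to replace $|\nabla u|$ by $|\nabla u|^\sharp$; the $q=\infty$ case is then obtained as a $\gamma\to\infty$ limit of the $\gamma<\infty$ cases. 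Your scheme would be shorter and more conceptual if it closed.

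However, there \emph{is} a genuine gap, and it is exactly where you flag it. Young's inequality bounds $\|\mathcal T G\|_{p^*,q}$ by $p^*\big(\int_0^\infty G(\sigma)^q\sigma^{q/p-1}\,d\sigma\big)^{1/q}$, whereas the right‑hand side of $(A_{p,q})$ carries $\|G\|_{p,q}^q=\int_0^\infty (G^*(t))^q t^{q/p-1}\,dt$. For $q>p$ the weight $t^{q/p-1}$ is \emph{increasing}, so the Hardy--Littlewood inequality goes the wrong way and one can have $\int G^q\sigma^{q/p-1}\,d\sigma > \|G\|_{p,q}^q$ strictly (e.g.\ $G=\mathbf 1_{(1,2)}$ gives $\tfrac pq(2^{q/p}-1)>\tfrac pq=\|G\|_{p,q}^q$). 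Thus the Young bound, applied to the $G$ coming from a radial non‑increasing $u$, is \emph{larger} than the target, not smaller, unless $G$ is itself non‑increasing. The P\'olya--Szeg\"o/ALT reduction only yields $u$ radial non‑increasing; it does not make $|\nabla u|$ a non‑increasing function of $|x|$ (indeed the maximizer $F_0$ in the Appendix is only equimeasurable with $|\nabla u|$, not monotone). Replacing $G$ by $G^*$ is also not allowed, since $\|\mathcal T G\|_{p^*,q}\le\|\mathcal T G^*\|_{p^*,q}$ is false in general (again $G=\mathbf 1_{(1,2)}$ gives a counterexample, e.g.\ at $q=\infty$). So the ``profile‑convexification'' / level‑function reduction that you defer is not a routine step: it is precisely the missing lemma, it must be proved \emph{with the sharp constant} in a mere quasi‑norm, and without it the proof does not close. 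The paper sidesteps this obstacle by never needing a monotone $G$: the Hardy--Littlewood inequality is applied directly to the triple product in $\|\nabla v\|_p$, replacing the factor $|\nabla u(|x|^{p/q})|$ by $|\nabla u|^\sharp(|x|^{p/q})$ inside an $L^p$ integral, which is harmless. Your sharpness analysis (the $|x|^{-(n-p)/p}$ ghost for $q=\infty$, the two‑sided truncations $G_\varepsilon(\sigma)=\sigma^{-1/p}\mathbf 1_{(\varepsilon,1/\varepsilon)}$ for $q<\infty$) is correct and in the same spirit as the paper's.
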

\par \bigskip
\noindent Then, surprisingly, we establish the equivalence between $(A_{p,\infty})$ and $(H_p)$.
\par \medskip
\begin{thm}\label{thm equivalence}
Let $1\leq p<n$. Then, Hardy's inequality
\begin{equation}\label{hardyequiv}\tag{$H_{p}$}
 \Big(\f{n-p}{p}\Big)^p\int_{\mathbb R
^n}\f{|u|^p}{|x|^p}dx\leq \int_{\mathbb R^n} |\nabla u|^p dx
\end{equation}
holds for any $u\in\mathcal{D}^{1,p}(\mathbb R^n)$ if and only if the
Sobolev-Marcinkiewicz embedding inequality
\begin{equation}\label{weakA}\tag{$A_{p,\infty}$}
    \|v\|_{p^{\ast},\infty}\leq
\f{p}{n-p}\,\omn^{-\f 1n} \|\nabla
    v\|_{p,\infty}
\end{equation}
holds for any $v\in \mathcal{D}_W^1L^{p,\infty}(\mathbb R^n)$.
\end{thm}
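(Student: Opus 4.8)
\emph{Strategy.} The idea is to show that, once both inequalities are symmetrized, they are governed by one and the same one–dimensional Hardy--type operator, and that the two resulting one–dimensional statements are the strong ($L^p$) and the weak ($L^{p,\infty}$) versions of the \emph{classical} one–dimensional Hardy inequality, which share the same sharp constant. Concretely, recall the Talenti--P\'olya--Szeg\"o pointwise rearrangement estimate (a consequence of the isoperimetric inequality, valid for every $u\in\mathcal D^{1,p}(\mathbb R^n)$ and every $u\in\mathcal D_W^1L^{p,\infty}(\mathbb R^n)$, since such functions vanish at infinity): writing $u^\#$ for the decreasing rearrangement and $T[f](t):=\int_t^\infty f(\tau)\tau^{\frac1n-1}\,d\tau$ for $f\geq0$ on $(0,\infty)$, one has $u^\#(t)\leq \frac{1}{n\,\omn^{1/n}}\,T\big[(|\nabla u|)^\#\big](t)$ for $t>0$, with equality when $|\nabla u|$ is radially decreasing. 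Combining this with the Hardy--Littlewood inequality and a change of variables, $\big(\f{n-p}{p}\big)^p\int_{\mathbb R^n}\f{|u|^p}{|x|^p}dx\leq \big(\f{n-p}{np}\big)^p\int_0^\infty T[(|\nabla u|)^\#]^p\,t^{-p/n}dt$ and $\int_{\mathbb R^n}|\nabla u|^pdx=\int_0^\infty(|\nabla u|)^\#(t)^p\,dt$, while $\|u\|_{p^\ast,\infty}\leq\f{1}{n\omn^{1/n}}\|T[(|\nabla u|)^\#]\|_{L^{p^\ast,\infty}(0,\infty)}$ and $\|\nabla u\|_{p,\infty}=\|(|\nabla u|)^\#\|_{L^{p,\infty}(0,\infty)}$. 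Hence \eqref{hardyequiv} \emph{follows from}, and — testing on the radial $u$ with radially decreasing gradient, for which the estimate is an equality and $(|\nabla u|)^\#$ is an arbitrary non-negative decreasing function — \emph{is equivalent to} the one–dimensional inequality $\|T[f]\|_{L^{p^\ast,p}(0,\infty)}\leq \f{np}{n-p}\|f\|_{L^p(0,\infty)}$, and likewise \eqref{weakA} is equivalent to $\|T[f]\|_{L^{p^\ast,\infty}(0,\infty)}\leq \f{np}{n-p}\|f\|_{L^{p,\infty}(0,\infty)}$, both for non-negative decreasing $f$. Note that the $\omn^{1/n}$ factors cancel, so the constant $\f{np}{n-p}$ is literally the \emph{same} in the two reduced inequalities.

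\emph{Reduction to the Hardy operator.} It remains to show that these two one–dimensional inequalities are equivalent, i.e.\ that $T$ has the same (sharp) operator norm from $L^{p,p}$ to $L^{p^\ast,p}$ and from $L^{p,\infty}$ to $L^{p^\ast,\infty}$. Assuming $1<p<n$, perform the monomial substitution $t=\theta^{-\frac{n(p-1)}{n-p}}$ and rewrite $f$ accordingly, namely set $\Phi(\theta)=u^\#(t)$ and $g=\Phi'\geq0$, so that $\Phi(\theta)=\int_0^\theta g$. A direct computation (in which the exponents are chosen precisely so as to flatten all weights) shows that $\int_0^\infty T[f]^p t^{-p/n}dt$ transforms into a constant times $\int_0^\infty\big(\f1\theta\int_0^\theta g\big)^p d\theta$ and $\int_0^\infty f^p dt$ into the same constant times $\big(\tfrac{p-1}{p}\cdot\tfrac{np}{n-p}\big)^{-p}\int_0^\infty g^p d\theta$, so that the reduced form of \eqref{hardyequiv} becomes \emph{exactly} the classical one–dimensional Hardy inequality
\[
\Big\|\tfrac1\theta\!\int_0^\theta g\Big\|_{L^p(0,\infty)}\leq \tfrac{p}{p-1}\,\|g\|_{L^p(0,\infty)} ,
\]
and the very same substitution turns the reduced form of \eqref{weakA} into its weak-type analogue $\big\|\tfrac1\theta\int_0^\theta g\big\|_{L^{p,\infty}(0,\infty)}\leq\tfrac{p}{p-1}\|g\|_{L^{p,\infty}(0,\infty)}$, where one uses that the Hardy average of a decreasing function is again decreasing, so the supremum defining the Marcinkiewicz quasinorm is attained on the profile itself. (For $p=1$ the substitution degenerates, but then \eqref{hardyequiv} is an identity by Fubini for radial functions, and \eqref{weakA} is immediate from $f(t)\leq\|f\|_{1,\infty}t^{-1}$; so this case is trivial.)

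\emph{The shared constant, and conclusion.} The classical Hardy inequality and its weak-type version hold with the \emph{same} sharp constant $\f{p}{p-1}$: the strong-type one is classical, while the weak-type one follows at once from the pointwise estimate $\f1\theta\int_0^\theta g(y)dy\leq \f1\theta\int_0^\theta g^\#(y)dy\leq \f{\|g\|_{p,\infty}}{\theta}\int_0^\theta y^{-1/p}dy=\f{p}{p-1}\|g\|_{p,\infty}\,\theta^{-1/p}$; and both are optimal in the limit on the common ``ghost'' extremal $g(\theta)=\theta^{-1/p}$, which pulled back through the substitution is precisely the Hardy extremal $|x|^{-(n-p)/p}$ of \cite{BV}. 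Chaining the equivalences of the previous two paragraphs yields \eqref{hardyequiv} $\Longleftrightarrow$ \eqref{weakA}; in particular this recovers the classical equivalence of \eqref{A_in} with Hardy's inequality at the endpoint $q=\infty$, consistently with the $q$–independence of the constant observed in Theorem \ref{thm generAlv}.

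\emph{Main difficulty.} The delicate point is conceptual rather than computational: a weak-type inequality does \emph{not} in general imply the corresponding strong-type inequality — and certainly not with the same constant, since a dyadic/layer-cake argument would replace an $\ell^p$–sum by an $\ell^1$–sum and destroy optimality. This obstruction is bypassed here only because, after symmetrization, \emph{both} \eqref{hardyequiv} and \eqref{weakA} collapse onto the single Hardy averaging operator, whose $L^p$– and $L^{p,\infty}$–operator norms coincide (and equal $\f{p}{p-1}$), via the same explicit extremal; the remaining work is the careful bookkeeping of the reduction — the Hardy--Littlewood step, the change of variables, and the tracking of how the constant $\f{p}{n-p}\omn^{-1/n}$ in \eqref{weakA} and the Hardy constant in \eqref{hardyequiv} both correspond to the single constant $\f{p}{p-1}$ downstairs.
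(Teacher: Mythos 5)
There is a genuine logical gap. You correctly reduce both inequalities, via Talenti--P\'olya--Szeg\"o and a monomial change of variables, to the one--dimensional Hardy averaging operator; you then observe that the $L^p$ and $L^{p,\infty}$ operator norms of that operator coincide at $\tfrac{p}{p-1}$, \emph{proving each independently} (the strong one by citing the classical result, the weak one by the pointwise estimate). Having shown both are true, you conclude ``chaining the equivalences yields $(H_p)\Leftrightarrow(A_{p,\infty})$.'' But this is the vacuous reading of the biconditional: any two true statements are trivially ``equivalent'' in that sense. The theorem as the paper intends it --- and as Section~\ref{weaktostrong} proves it --- asserts a \emph{derivation}: assume only that $(A_{p,\infty})$ holds (not that you already know the strong one--dimensional Hardy bound), and deduce $(H_p)$ from it. Your ``Main difficulty'' paragraph names exactly this obstacle (weak-type does not imply strong-type, even with the same constant), but the ``bypass'' you offer is circular: saying that the two operator norms coincide is itself the conclusion of two independent proofs, not a mechanism by which knowledge of one inequality produces the other.

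The paper resolves this by a genuinely constructive step: given radial decreasing $u$ supported in $B_1$, it forms the auxiliary function
\[
v(r)=\int_r^1\rho^{-n/p}\int_\rho^1|u'(t)|^p\,t^{n-1}\,dt\,d\rho,
\]
for which $\|\nabla v\|_{p,\infty}=\omn^{1/p-1}\|\nabla u\|_p^p$ \emph{exactly} (an $L^{p,\infty}$ quasinorm becomes an $L^p$ integral because the integrand $\rho^{n/p}|v'(\rho)|$ is monotone and its supremum is a genuine limit), and for which $\|v\|_{p^*,\infty}$ dominates, up to the right constant, the Hardy integral $\int u^p|x|^{-p}$. Feeding $v$ into $(A_{p,\infty})$ then \emph{produces} $(H_p)$ for $u$. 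That is the piece your argument is missing: a transformation $u\mapsto v$ that converts the hypothesis $(A_{p,\infty})$ into the conclusion $(H_p)$ without independently re-deriving the strong one-dimensional Hardy inequality. If you want to stay within your one-dimensional reduction, you would need an analogous device downstairs --- a map taking $g\in L^p$ to some $G$ for which the weak Hardy bound on $G$, applied as a black box, yields the strong Hardy bound on $g$ with the same constant --- and that is precisely the nontrivial content, not the coincidence of the two sharp constants.

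The forward direction $(H_p)\Rightarrow(A_{p,\infty})$ does not suffer from this defect: the paper obtains it as a byproduct of Theorem~\ref{thm generAlv}, which genuinely deduces $(A_{p,q})$ for $q>p$ from $(A_{p,p})\Leftrightarrow(H_p)$ via scaling, and your symmetrization computations are in the same spirit. So the gap is confined to, but is essential for, the converse implication.
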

\par \bigskip
Finally we study the attainability of $(A_{p,q})$. In particular, in the limiting case $q=\infty$, ($A_{p,\infty}$) turns out to be attained, in striking contrast to Hardy's inequality, regardless of their equivalence as established in Theorem \ref{thm equivalence}.
\begin{thm}\label{thm attainedornot}
Let $1\leq p<n$ and $1\le q\leq \infty$. Then, the sharp constant in
\eqref{SobLorineq-R^n} is attained if and only if $q=+\infty$.
Moreover, an extremal function for  $(A_{p,\infty})$ in $\mathcal
D^1_WL^{p,\infty}$ is given by
\begin{equation*}
\psi(x)= |x|^{-\frac{n-p}{p}}
\end{equation*}
\end{thm}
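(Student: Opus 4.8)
The plan is to split the statement into two parts: the non-attainability for $1\le q<\infty$, and the attainability (with explicit extremal) for $q=\infty$.

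For the case $q<\infty$, the strategy is to argue by contradiction using the equality cases in the two ingredients behind $(A_{p,q})$: the P\'olya--Szeg\"o principle and the one-dimensional Hardy-type inequality obtained after passing to radially decreasing rearrangements. Recall that $(A_{p,q})$ is proved by first replacing $u$ by its Schwarz symmetrization $u^*$, which does not increase $\|\nabla u\|_{p,q}$ and preserves $\|u\|_{p^*,q}$; then for radial decreasing $u$ one has an \emph{equality-free} inequality coming from a strict monotonicity/Hardy argument in the radial variable. So if $u$ were an extremal, then necessarily $u=u^*$ (up to translation, and since the problem is on $\mathbb{R}^n$ one can also normalize the center to $0$), i.e. $u$ is radial and decreasing; and moreover equality in the radial inequality forces $u$ to be, essentially, a power of $|x|$ — precisely the borderline profile $|x|^{-(n-p)/p}$. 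The punchline is then that this profile does \emph{not} lie in $L^{p^*,q}(\mathbb{R}^n)$ for $q<\infty$ (its rearrangement $t\mapsto t^{-1/p^*}$ is in $L^{p^*,\infty}$ but not in $L^{p^*,q}$ for any finite $q$, by the standard computation $\int_0^\infty (t^{1/p^*}\,t^{-1/p^*})^q\,\frac{dt}{t}=\int_0^\infty \frac{dt}{t}=\infty$), hence it is not an admissible competitor, and no extremal exists. One must also rule out a minimizing sequence concentrating or spreading: here the scaling invariance $u\mapsto \lambda^{(n-p)/p}u(\lambda x)$ leaves both sides of $(A_{p,q})$ invariant, which is exactly the mechanism that lets the (forbidden) profile $|x|^{-(n-p)/p}$ appear as a weak limit of admissible functions while escaping the space — so the infimum is approached but never reached.

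For $q=\infty$, the plan is to verify directly that $\psi(x)=|x|^{-(n-p)/p}$ realizes equality in $(A_{p,\infty})$. First, $\psi\in \mathcal{D}^1_WL^{p,\infty}(\mathbb{R}^n)$: compute $|\nabla\psi(x)|=\frac{n-p}{p}|x|^{-(n-p)/p-1}=\frac{n-p}{p}|x|^{-n/p}$, whose distribution function is a pure power, so $|\nabla\psi|\in L^{p,\infty}$; similarly $\psi\in L^{p^*,\infty}$ since $-(n-p)/p=-n/p^*$. Both Lorentz $\infty$-quasinorms are then just the suprema $\sup_{t>0} t^{1/r}\,f^*(t)$, which for these pure-power rearrangements are attained for \emph{every} $t$, and an explicit evaluation gives $\|\psi\|_{p^*,\infty}=\omega_n^{-1/n}\cdot\frac{p}{n-p}\,\cdot\,\||\nabla\psi|\,\|_{p,\infty}$ — this is the routine calculation I would carry out with the formulas $f^*$ for $|x|^{-\alpha}$ on $\mathbb{R}^n$, namely $(|x|^{-\alpha})^*(t)=(t/\omega_n)^{-\alpha/n}$ (so that its $L^{r,\infty}$-norm is $\omega_n^{\alpha/n}$ when $\alpha=n/r$). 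Matching constants confirms equality, hence the sharp constant $\frac{p}{n-p}\omega_n^{-1/n}$ is attained. Note that $\psi\notin \mathcal{D}^1_HL^{p,\infty}$ (it is not in the closure of $C_c^\infty$), consistent with the strict inclusion $\mathcal{D}^1_HL^{p,\infty}\subsetneq\mathcal{D}^1_WL^{p,\infty}$ recorded in the excerpt, and this is precisely why $q=\infty$ behaves differently: the extremal lives only in the larger space $\mathcal{D}^1_W$.

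The main obstacle I anticipate is the rigidity step for $q<\infty$: turning ``$u$ is an extremal'' into ``$u$ is (a scaled copy of) $|x|^{-(n-p)/p}$'' rigorously. P\'olya--Szeg\"o gives equality only up to translates and only under a regularity/positivity hypothesis on $u^*$, and one must track the equality case through the passage to the radial problem carefully; the cleanest route is probably to observe that after symmetrization the inequality reduces to a one-dimensional weighted inequality whose extremal ODE is $(r^{n-1}|w'|^{p-2}w')' = c\,r^{n-1-p}|w|^{p-2}w$ with the Hardy exponent, whose only homogeneous solution decaying at infinity is the power $r^{-(n-p)/p}$, and then to invoke the integrability obstruction. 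Once that rigidity is in hand, the contradiction with membership in $L^{p^*,q}$, $q<\infty$, is immediate, and the $q=\infty$ case is a direct verification as sketched.
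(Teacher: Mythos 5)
Your $q=\infty$ part is essentially the same direct verification the paper uses: compute $\|\psi\|_{p^*,\infty}$ and $\|\nabla\psi\|_{p,\infty}$ for the pure-power profile and check that the ratio matches the sharp constant. That part is fine.

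Your $q<\infty$ part, however, has a genuine gap, and it is exactly the one you flag yourself: the rigidity step that would turn ``$u$ is an extremal'' into ``$u$ is a scaled copy of $|x|^{-(n-p)/p}$.'' You propose to obtain this by tracking equality through P\'olya--Szeg\"o and then a one-dimensional weighted Euler--Lagrange ODE, but this is not carried out, and it is not straightforward to carry out in this setting. In particular, for $p<q<\infty$ the paper never establishes a direct one-dimensional radial inequality for the Lorentz functional to which a pointwise equality/ODE analysis could be applied; the proof of \eqref{SobLorineq-R^n} in that range proceeds by the nonlinear substitution $v(x)=[u(|x|^{p/q})]^{q/p}$, which reduces matters to Alvino's $(A_{p,p})$ case rather than to a tractable radial ODE. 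In addition, the equality case of P\'olya--Szeg\"o itself requires care (level sets, coarea regularity), so ``extremal $\Rightarrow$ radial decreasing'' is not automatic without extra argument. So as written, the contradiction you want (``the putative extremal would have to be $|x|^{-(n-p)/p}$, which is not in $L^{p^*,q}$'') rests on an unproved rigidity claim.

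The paper avoids this entirely. Its argument for $p<q<\infty$ reuses the same substitution $v(x)=[u(|x|^{p/q})]^{q/p}$ and the two norm identities/estimates
$$
\|v\|_{p^*,p}=\Big(\tfrac{q}{p}\Big)^{1/p}\omn^{(p-q)/(pp^*)}\|u\|_{p^*,q}^{q/p},
\qquad
\|\nabla v\|_{p}\le\Big(\tfrac{q}{p}\Big)^{1/p}\omn^{-\f{(n-p)(q-p)}{np^2}}\|u\|_{p^*,q}^{(q-p)/p}\|\nabla u\|_{p,q},
$$
to show that a radially decreasing extremal for $(A_{p,q})$ would produce a radially decreasing $v\in\mathcal D^{1,p}$ achieving equality in $(A_{p,p})$, hence equality in Hardy's inequality \eqref{Hi}, contradicting the well-known non-attainability of Hardy. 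This transfer argument requires no classification of extremals and no equality analysis in P\'olya--Szeg\"o; it buys you the non-attainability directly from the already-proved non-attainability of Hardy. If you want to keep your rigidity route, you would have to actually derive and solve the weighted Euler--Lagrange problem for the Lorentz functional in the radial class and justify that equality in the reduction to the radial case forces radial symmetry; the paper's approach is both shorter and avoids these pitfalls.
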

\par \medskip \noindent
{\bf Remark:} The extremal function in Theorem \ref{thm attainedornot} is exactly the ``ghost'' extremal function of \cite[Brezis-Vazquez]{BV}.
\par \bigskip
\subsection*{Overview}
$ $
\par \smallskip
\noindent
In Section \ref{preliminaries} we recall for convenience some well known facts and prove a few preliminary results. Then, in Section \ref{H_implies_A} we prove Theorem \ref{thm generAlv} by showing that  \eqref{A_in2} for $p<q\leq \infty$ can be obtained as a consequence of \eqref{A_in}, which is actually equivalent to Hardy's inequality. The proof relies on suitable scaling properties whereas the sharpness of the embeddng constants is proved by inspection. As a byproduct of Theorem \ref{H_implies_A}, the sharp Marcinkiewicz type inequality $(A_{p,\infty})$ in
$\mathcal{D}^1_WL^{p,\infty}$ is a consequence of $(A_{p,p})$,
that is of the Hardy inequality \eqref{Hi}. In Section \ref{weaktostrong}, we surprisingly prove also the converse, namely that the validity of $(A_{p,\infty})$ in
$\mathcal{D}^1_WL^{p,\infty}$ implies Hardy's inequality
\eqref{Hi} in $\mathcal{D}^{1,p}$. In Section \ref{att} we prove that the best constant in $(A_{p,q})$ is never attained as long as $q<\infty$, and then attained at the endpoint of the Lorentz scale for $q=\infty$. This is in striking contrast with Hardy's inequality which is never attained though being equivalent to $(A_{p,\infty})$. In this sense our functional framework, namely the Sobolev-Marcinkiewicz space
$\mathcal{D}_W^1L^{p,\infty}$, seems to be qualified as more natural than the classical $\mathcal{D}^{1,p}$ setting in the tradition of Hardy type inequalities. This phenomenon throws light on the importance of considering the couple (inequality, functional setting), as the whole information retained can be differently shared between the two components through equivalent versions.
Finally, in the Appendix we recall and adapt to our situation a by now standard technique to reduce the embedding problems to the radial case, as initially developed by \cite[Alvino-Lions-Trombetti]{ALT}.
\par \medskip
\section{Preliminaries}\label{preliminaries}

\noindent For convenience of the reader, let us briefly recall some basic facts on Lorentz spaces \cite{L} which will be widely used throughout the paper. \\
\noindent For a
measurable function $u: \Omega \to \mathbb R^+$, let $u^*$ denote
its decreasing rearrangement which is defined as the
distribution function of the distribution function $\mu_{u}$ of
$u$, namely
$$
u^*(s)=|\{t\in[0,+\infty)\,:\,\mu_{u}(t)>s\}|=\sup\{t>0\,:\,
\mu_{u}(t)>s\}, \quad s\in [0, |\Omega|]
$$
whereas the spherically symmetric rearrangement
$u^{\#}(x)$ of $u$ can be defined as
$$
u^{\#}(x)=u^{*}(\omega_n |x|^n), \quad x\in \Omega^{\#}
$$
where $\Omega^{\#}\subset \mathbb{R}^n$ is the open ball with
center in the origin which satisfies $|\Omega^{\#}|=|\Omega|$ and
$\omega_n$ is the area of the unit sphere of $\mathbb R^n$.
Clearly, $u^*$ is a nonnegative, non-increasing and
right-continuous function on $[0,\infty)$. Moreover, the
(nonlinear) rearrangement operator enjoys the following
properties:
\begin{enumerate}
\item[i)] Positively homogeneous: $(\lambda
u)^*=|\lambda|u^*,\quad\lambda\in\mathbb R$;
\vspace{.2cm}
\item[ii)]
Sub-additive: $(u+v)^*(t+s)\leq u^*(t)+v^*(s),\quad t,s\geq 0$;
\vspace{.2cm}
\item[iii)] Monotone: $0\leq u(x)\leq v(x) \hbox{ a.e.~in } \om
\Rightarrow u^{\ast}(t)\leq v^{\ast}(t),\quad t\in (0,|\om|)$;
\vspace{.2cm}
\item[iv)] $u$ and $u^*$ are equidistributed and in particular (Cavalieri's principle)
$$\int_\om A(|u(x)|)\,dx=\int_0^{|\om|}A(u^*(s))\,ds$$ for any
continuous funtion $A:[0,\infty]\rightarrow [0,\infty]$,
nondecreasing and such that $A(0)=0$;
\vspace{.2cm}
\item[v)] The following
inequality holds (Hardy-Littlewood): $$\int_\om
u(x)v(x)\,dx\leq\int_0^{|\om|}u^*(s)v^*(s)\,ds$$ provided the
integrals involved are well defined.
\vspace{.2cm}
\item[vi)] The map $u\mapsto u^*$ preserves
Lipschitz regularity, namely $$^*: Lip(\om)\longrightarrow
Lip(0,|\om|)$$.
\end{enumerate}

\noindent Then, the Lorentz space $L^{p,q}(\Omega)$ is a rearrangement invariant Banach space \cite{BS} which can be defined as follows
$$
L^{p,q}(\Omega) := \bigg\{ u : \Omega \to \mathbb R \
\hbox{measurable} \ \big| \ \|u\|_{p,q} :=
\Big(\int_0^{\infty}\left(u^{\ast}(t)t^{1/{p}}\right)^q\frac{dt}{t}\Big)^{\frac{1}{q}}
< \infty \bigg\}
$$
where the quantity $\|u\|_{p,q}$ is a quasi-norm which admits an equivalent norm. One clearly has $L^{p,p} = L^p$
and furthermore, with respect to the second index, Lorentz spaces
satisfy the following inclusions (Lorentz scale)
\begin{equation}\label{lor}
 L^{p,q_1} \subsetneq L^{p,q_2} \ , \ \hbox{ if } \ 1 \le q_1 < q_2  \le \infty \
\end{equation}
For $q=\infty$ we obtain the so-called Marcinkiewicz or weak-$L^p$
space, which is defined as follows
\begin{equation}\label{wqn}
 \|u\|_{p,\infty}:=\sup_{t>0} t^{\f{1}{p}}u^*(t)\
\end{equation}
\noindent Notice that in particular one has $L^{p^*,p}\subsetneq L^{p^*,p^*}=L^{p^*}$.

\vspace*{0.2cm}

\noindent Sobolev-Lorentz spaces generalize classical
Sobolev spaces. First order Sobolev-Lorentz spaces can be defined either as the closure of smooth
compactly supported functions $u$ with respect to the norm
$\|\nabla u\|_{p,q}+\|u\|_{p,q}$, or as the set of functions in
$L^{p,q}(\Omega)$ whose distributional gradient also belongs to
$L^{p,q}$. We refer for the general theory on Sobolev-Lorentz spaces to \cite{C} and references therein, and to \cite{CP1} for more general
Sobolev spaces realized on rearrangement invariant
Banach spaces.

\noindent Here we focus on {\emph{homogeneous Sobolev-Lorentz}} spaces defined for $1\leq p<+\infty$ and $1\leq q\leq +\infty$ by
\begin{equation*}
  \mathcal D_H^1L^{p,q}=cl\big\{u\in \mathcal C^{\infty}_c(\mathbb R^n): \|\nabla
u\|_{p,q}<\infty\big\}\
\end{equation*}
Since $\mathcal
D_H^1L^{p,q}\hookrightarrow L^{p^*,q}$, as a consequence of
\cite{Alv1, Ta}, one may also consider the alternative definition
given by
\begin{equation*}
  \mathcal D_W^1L^{p,q}=\left\{u\in L^{p^*,q}(\mathbb R^n): \|\nabla
u\|_{p,q}<\infty\right\}\ .
\end{equation*}
It turns out that the two spaces coincide as long as $q<\infty$
\cite[Section 4]{C}:
$$
\mathcal D_W^1L^{p,q}= \mathcal D_H^1L^{p,q}:=\mathcal D^1L^{p,q}
\quad \hbox{ if } 1\leq p,q<\infty
$$
whereas  in the {\emph{limiting case }} $q=\infty$, we have
$$
\mathcal D_H^1L^{p,\infty} \subsetneq \mathcal D_W^1L^{p,\infty}
$$
A function  belonging to $\mathcal D_W^1L^{p,\infty}\setminus
\mathcal D_H^1L^{p,\infty}$ is given by
$u(x)=|x|^{-\frac{n-p}{p}}$ (\cite[Prop.~4.7]{C}).

\noindent Sobolev-Lorentz spaces enjoy invariance properties by scaling. As a consequence, inequalities \eqref{SobLorineq-R^n} and in particular the Hardy inequality \eqref{Hi} are invariant under the action of the group of dilations, as established in the following
\begin{prop}\label{dilation_inv}
 Let $\lambda>0$, $1\leq p<n$, $1\leq q\leq \infty$ and $u_\lambda(x):=u(\lambda x)$. Then, the following quotients
 \begin{eqnarray*}
  \displaystyle \frac{\|\nabla
u_{\lambda}\|_{p,q}}{\|u_{\lambda}\|_{p^*,q}}\ \ , \quad &&u\in \mathcal{D}^1L^{p,q}(\mathbb{R}^n) \\
&&\\
\displaystyle \frac{\displaystyle\int_{\mathbb R^n}
\frac{|u_{\lambda}|^p}{|x|^p}\,dx}{\|\nabla u_{\lambda}\|_p^p}\ \ , && u\in \mathcal D^{1,p}(\mathbb{R}^n)
\end{eqnarray*}
are constant with respect to $\lambda$.
\end{prop}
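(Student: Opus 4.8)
The statement is a pure scaling computation, which I would carry out separately for the two quotients. First I would record the effect of a dilation on the decreasing rearrangement: if $u_\lambda(x)=u(\lambda x)$ then $\mu_{u_\lambda}(t)=\lambda^{-n}\mu_u(t)$ for every $t>0$, and hence $(u_\lambda)^*(s)=u^*(\lambda^n s)$. Substituting this into the definition of the Lorentz quasi-norm and changing variables $\sigma=\lambda^n s$, which leaves the measure $ds/s$ unchanged, one gets
\[
\|u_\lambda\|_{r,q}=\lambda^{-n/r}\,\|u\|_{r,q},\qquad 1\le r<\infty,\ 1\le q\le\infty,
\]
with the endpoint $q=\infty$ following in the same way from the sup-formulation \eqref{wqn}. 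This is the only preliminary fact needed, and it is elementary once the rearrangement identity is in place.

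Next I would combine this with the chain rule $\nabla u_\lambda(x)=\lambda\,(\nabla u)(\lambda x)$, i.e. $|\nabla u_\lambda|=\lambda\,\bigl(|\nabla u|\bigr)_\lambda$, to obtain $\|\nabla u_\lambda\|_{p,q}=\lambda^{1-n/p}\|\nabla u\|_{p,q}$ and $\|u_\lambda\|_{p^*,q}=\lambda^{-n/p^*}\|u\|_{p^*,q}$. Therefore the first quotient equals $\lambda^{\,1-n/p+n/p^*}$ times the corresponding quotient for $u$, and the exponent vanishes because $1/p^*=1/p-1/n$ forces $1-n/p+n/p^*=0$. This cancellation is really the single point worth isolating: it is precisely the scaling invariance that singles out $p^*$ as the correct target exponent in \eqref{SobLorineq-R^n}, and it is why the best constant there, like the Hardy constant, does not register the dilation.

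For the Hardy quotient I would instead change variables $y=\lambda x$ directly in each integral, using $|x|^{-p}=\lambda^{p}|y|^{-p}$, $dx=\lambda^{-n}dy$ and $|\nabla u_\lambda(x)|^p=\lambda^p|(\nabla u)(\lambda x)|^p$, which yields
\[
\int_{\mathbb R^n}\f{|u_\lambda|^p}{|x|^p}\,dx=\lambda^{\,p-n}\int_{\mathbb R^n}\f{|u|^p}{|x|^p}\,dx,\qquad
\|\nabla u_\lambda\|_p^p=\lambda^{\,p-n}\,\|\nabla u\|_p^p,
\]
so that the common factor $\lambda^{p-n}$ cancels in the ratio. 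No step here presents a genuine obstacle; the only point deserving a word is finiteness of the quantities involved, which is guaranteed for nontrivial $u$ by the assumptions $u\in\mathcal D^1L^{p,q}(\mathbb R^n)$ (resp.\ $u\in\mathcal D^{1,p}(\mathbb R^n)$) together with the embedding $\mathcal D^1_WL^{p,q}\hookrightarrow L^{p^*,q}$ recalled above, so that all the quotients are well defined for every $\lambda>0$.
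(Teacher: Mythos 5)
Your proof is correct and follows essentially the same route as the paper: compute the effect of dilation on the decreasing rearrangement, apply the chain rule to the gradient, and observe that the exponents cancel precisely because $1/p^* = 1/p - 1/n$, then handle the Hardy quotient by a direct change of variables. The only (harmless) stylistic difference is that you state the scaling identity $\|u_\lambda\|_{r,q}=\lambda^{-n/r}\|u\|_{r,q}$ once for general $r$, covering $q<\infty$ and $q=\infty$ in a single stroke, whereas the paper writes out the two cases separately.
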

\begin{proof}
Let us first consider the case $q<\infty$. We have  $u^\sharp_\lambda(|x|)=u^\sharp(\lambda |x|)$, $u^*_\lambda(t)=u^*(\lambda^n t)$ and
  \begin{align*}
  \|u_\lambda\|_{p^*,q}^q &=\int_{0}^{+\infty}\left[u^*_\lambda (t)t^{\frac
  1{p^*}}\right]^q\frac{dt}{t}=\int_{0}^{+\infty}\left[u^* (\tau)(\lambda^{-n}\tau)^{\frac
  1{p^*}}\right]^q\frac{d \tau}{\tau}\\
  &=\lambda^{-\frac{nq}{p^*}} \|u\|_{p^*,q}^q
  \end{align*}
 From $\nabla u_\lambda(x)=\lambda \nabla u(x)$, we have
  $|\nabla u_\lambda|^\sharp(|x|)=\lambda |\nabla u|^\sharp(\lambda |x|)$ and $|\nabla u_\lambda|^*(t)=
  \lambda |\nabla u|^*(\lambda^n t)$. Hence
\begin{align*}
  \|\nabla u_\lambda\|_{p,q}^q &=\int_{0}^{+\infty}\left[|\nabla u_\lambda|^* (t)t^{\frac
  1{p}}\right]^q\frac{dt}{t}=\int_{0}^{+\infty}\left[\lambda |\nabla u|^* (\tau)(\lambda^{-n}\tau)^{\frac
  1{p}}\right]^q\frac{d \tau}{\tau}\\
&=\lambda^{(-\frac{n}{p}+1)q} \|\nabla u\|_{p,q}^q
\end{align*}
and the first claim follows as
$(-\frac{n}{p}+1)=-\frac{n}{p^*}$\ .\\
When $q=\infty$ we have,
\begin{align*}
  \|u_\lambda\|_{p^*,\infty}&=\sup_{t}t^{1/p^*}u_\lambda^*(t)=
  \lambda^{-n/p^*}\sup_{\tau}\tau^{1/p^*}u^*(\tau)\\
&= \lambda^{-n/p^*}\|u\|_{p^*, \infty}
  \end{align*}
 as well as
\begin{align*}
  \|\nabla u_\lambda\|_{p,\infty}&=\sup_{t}t^{1/p}\nabla u_\lambda^*(t)=
  \lambda^{-n/p+1}\sup_{\tau}\tau^{1/p^*}\nabla u^*(\tau)\\
&= \lambda^{-n/p+1}\|u\|_{p, \infty}
\end{align*}
and the first claim follows as above.

\noindent The second claim follows by observing that
$$
\int_{\mathbb R^n}\frac{|v_\lambda|^p}{|x|^p}\,dx= \lambda^{-n+p}
\int_{\mathbb R^n}\frac{|v|^p}{|x|^p}\,dx
$$
\end{proof}

\section{Proof of Theorem \ref{thm generAlv} }\label{H_implies_A}
\noindent Next we will prove that the sharp embedding inequality $(A_{p,p})$ implies all the
embedding inequalities $(A_{p,q})$, $p<q\leq
+\infty$, still preserving the sharp embedding constants. The proof
strongly relies on the reduction to the radial case, which is a
rather delicate issue in the case $q>p$: indeed, the argument used in
\cite{Alv1} for $q<p$, is based on a generalization of the P\'olya-Szeg\"o result, which cannot be applied here. However, following the approach of \cite{ALT}, one can
prove that for any $u\in \mathcal C^{\infty}_c(\mathbb R^n)$ there
exists $v\in \mathcal{D}^{1,\sharp}L^{p,q}(\mathbb R^n)$, namely
$v\in \mathcal{D}^{1}L^{p,q}(\mathbb R^n)$ and it is radial and
monotone decreasing, such that
$$
\|v\|_{p^*,q}\geq \|u\|_{p^*,q} \quad \hbox{ and } \quad \|\nabla
v\|_{p,q}\leq \|\nabla u\|_{p,q}
$$
This fact allows to restrict to radial decreasing functions also in
the case $p<q<+\infty$. Though the argument is standard by now, we outline the
details in the Appendix.
\par \medskip
\noindent The proof of Theorem \ref{thm generAlv} is based on scaling arguments. Let us divide the proof into two steps.
\medskip

\noindent \textbf{Step 1.} The case $p<q<\infty$.
%(\emph{NB: A simpler proof!  Choose $s=\f{q}{q-p}$ in the previous   version.})

\smallskip

 \noindent Let $ u\in \mathcal{D}^1L^{p,q}(\mathbb R^n)$ such that $u=u^\sharp$ and define
the radially decreasing function
$$
v(x):=\left[u(|x|^{\f pq})\right]^{\f
qp}=\left[u^*(|x|^{\f{np}q}\omn)\right]^{\f qp}
$$
so that
$$
v^{\sharp}(|x|)=\left[u(|x|^{\f pq})\right]^{\f qp}\ , \
v^*(t)=v^{\sharp}\big((\textstyle{\frac{t}{\omn}})^{1/n}\big)%
=\left[u\big((\textstyle{\frac{t}{\omn}})^{\f{p}{qn}}\big)\right]^{\f
qp}=\Big[u^*(t^{\f pq}\omn^{\f{q-p}{q}})\Big]^{\f qp}
$$
\noindent One has $v\in L^{p^*,p}(\mathbb{R}^n)$, indeed
\begin{eqnarray}\label{vnorm2}
  \nonumber \|v\|_{p^*,p}&=&\left\{\int_0^{\infty}\left[v^*(t)t^{1/p^*}\right]^{p}\f{dt}{t}\right\}^{1/p}\\
  \nonumber &=& \left\{\int_0^{\infty}\left[(u^*(t^{\f pq}\omn^{\f{q-p}{q}}))^{\f qp}t^{1/ p^*}\right]^{
  p}\f{dt}{t}\right\}^{1/p}\\
 \nonumber  &=&\left\{\int_0^{\infty}\left[u^*(t^{\f pq}\omn^{\f{q-p}{q}})t^{\f{n-p}{nq}}\right]^{q}\f{dt}{t}\right\}^{1/p}\\
  \nonumber &=& \left(\f qp\right)^{\f 1p}\left\{\int_0^{\infty}\left[u^*(\tau)\tau^{\f{1}{p^*}}\omn^{\f{p-q}{q p^*}}\right]^{q}%
  \f{d\tau}{\tau}\right\}^{1/p}\\
  \nonumber&=& \left(\f qp\right)^{\f 1p}\omn^{\f{p-q}{p p^*}}%
\left\{\int_0^{\infty}\left[u^*(\tau)\tau^{\f{1}{p^*}}\right]^{q}\f{d\tau}{\tau}\right\}^{1/p}\\
  &=& \left(\f qp\right)^{\f 1p}\omn^{\f{p-q}{p p^*}}\|u\|^{\f qp}_{p^*,  q}%
\end{eqnarray}
Moreover, $v\in \mathcal{D}^{1,p}(\mathbb{R}^n)$ as one has
$$
|\nabla v(x)|=\big|u(|x|^{\f pq})\big|^{\f{q-p}{p}}\big|\nabla u(|x|^{\f
pq})\big||x|^{\f{p-q}{q}}
$$
so that %
\begin{eqnarray*}
\|\nabla v\|_{p}  &=&\left\{\int_{\mathbb R^n}|\nabla
  v|^{p}dx\right\}^{1/p}\\
  &=&\left\{\int_{\mathbb R^n}\left[|u(|x|^{\f pq})|^{\f{q-p}{p}}|\nabla u(|x|^{\f
pq})||x|^{\f{p-q}{q}}\right]^{p}dx\right\}^{1/p}
\end{eqnarray*}
here the fact $q>p$ is crucial. Next apply the (generalized) Hardy-Littlewood inequality to have
$$
\int_{\mathbb R^n}|f(x)g(x)h(x)|dx\leq
\int_0^{\infty}f^*(t)g^*(t)h^*(t)dt=\int_{\mathbb
R^n}f^{\sharp}(x)g^{\sharp}(x)h^{\sharp}(x)dx
$$
Since $q>p$ the following hold
\begin{eqnarray*}
\left(|u(|x|^{\f
pq})|^{\f{q-p}{p}}\right)^{\sharp}(|y|)&=&|u|^{\f{q-p}{p}}(|y|^{\f
pq})\\
\left(|x|^{\f{p-q}{q}}\right)^{\sharp}(|y|)&=&|y|^{\f{p-q}{q}}\\
|\nabla u(|x^{\f pq}|)|^{\sharp}(|y|)&=&|\nabla
u|^{\sharp}(|y|^{\f pq})\ .
\end{eqnarray*}
Thus
\begin{eqnarray*}
\|\nabla v\|_{p}&\leq &\left\{\int_{\mathbb R^n}\left[
|u|^{\f{q-p}{p}}(|x|^{\f pq})|\nabla
u|^{\sharp}(|x|^{\f pq})|x|^{\f{p-q}{q}}\right]^{p}dx\right\}^{1/p}\\
&=&\left\{\f{q}{p}\int_{\mathbb R^n}\left[
|u|^{\f{q-p}{p}}(|y|)|\nabla
u|^{\sharp}(|y|)|y|^{\f{p-q}{p}}\right]^{p}|y|^{n\f{q-p}{p}}dy\right\}^{1/p}\\
&=&\left(\f qp\right)^{\f 1p}\left\{\int_{\mathbb
R^n}|u|^{q-p}(|y|)(|\nabla u
|^{\sharp})^{p}(|y|)||y|^{\f{(n-p)(q-p)}{p}}dy\right\}^{1/p}
\end{eqnarray*}
Let us now multiply and divide by $|y|^{n\f{q-p}{q}}$ to get
\begin{eqnarray}\label{nablavnorm2}
\nonumber \|\nabla v\|_{p}&\leq& \left(\f qp\right)^{\f
1p}\left\{\int_{\mathbb
R^n}\left[|u|^{q-p}|y|^{\f{(n-p)(q-p)}{p}}\cdot |y|^{-n\f{q-p}{q}}
\right]\left[(|\nabla
u|^{\sharp})^{p}(|y|)|y|^{n\f{q-p}{q}}\right]dy\right\}^{1/p}\\
\nonumber &=&\left(\f qp\right)^{\f 1p}\left\{\int_{\mathbb
R^n}\left[|u|^{q-p}\cdot
|y|^{\f{(q-p)[q(n-p)-np]}{qp}}\right]\left[(|\nabla
u|^{\sharp})^{p}(|y|)|y|^{n\f{q-p}{q}}\right]dy\right\}^{1/p}\\
\nonumber  &\leq& \left(\f qp\right)^{\f 1p} \||u|^{q-p}\cdot
|y|^{\f{(q-p)[q(n-p)-np]}{qp}}\|^{1/p}_{\f{q}{q-p}} \|(|\nabla
u|^{\sharp})^{p}(|y|)|y|^{n\f{q-p}{q}}\|^{1/p}_{\f qp}\\
\nonumber & =& \left(\f qp\right)^{\f 1p} \left\{\int_{\mathbb
R^n} |u|^q |y|^{\f{q(n-p)-np}{p}}
dy\right\}^{\f{q-p}{qp}}\left\{\int_{\mathbb R^n} (|\nabla
u|^{\sharp})^q |y|^{n\f{q-p}{p}} dy\right\}^{\f{1}{q}}\\
\nonumber & =& \left(\f qp\right)^{\f
1p}\left\{\omn^{-\f{q(n-p)-np}{np}}\int_0^{\infty} (u^*)^q
t^{\f{q(n-p)-np}{np}}
dt\right\}^{\f{q-p}{qp}}\cdot \\
\nonumber &&\qquad \qquad \cdot \left\{\omn^{-\f{q-p}{p}}\int_0^{\infty}
(|\nabla u|^{*})^q t^{\f{q-p}{p}} dt\right\}^{\f{1}{q}} \\
&=&\left(\f qp\right)^{\f 1p}
\omn^{-\f{(n-p)(q-p)}{np^2}}\|u\|_{p^*,q}^{\f{q-p}{p}}\cdot\|\nabla
u\|_{p,q}
\end{eqnarray}
Now combine Alvino's inequality
$$
\|v\|_{p^*, p}\leq \f{p}{n-p}\omn^{-1/n}\|\nabla v\|_{p}
$$
with \eqref{vnorm2} and \eqref{nablavnorm2} to obtain
\begin{eqnarray*}
  \|u\|_{p^*,q}&=&\left(\f pq\right)^{\f 1q}\omn^{\f{(q-p)(n-p)}{nqp}}\|v\|_{p^*, p}^{\f{p}{q}}\\
  &\leq &\left(\f pq\right)^{\f 1q}\omn^{\f{(q-p)(n-p)}{nqp}}\left(\f{p}{n-p}\right)^{\f pq}\omn^{-\f p{nq}}\|\nabla v\|_{p}^{\f pq}\\
  &\leq & \left(\f{p}{n-p}\right)^{\f pq}\omn^{-\f p{nq}}%
  \|u\|_{p^*,q}^{\f{q-p}{q}}\cdot\|\nabla u\|_{p,q}^{\f
pq}
\end{eqnarray*}
and thus our claim
\begin{eqnarray*}
  \|u\|_{p^*,q}&\leq &
\f{p}{n-p}\,\omn^{-\f 1n} \|\nabla
  u\|_{p,q}
\end{eqnarray*}

\noindent \textbf{Step 2.} The case $q=\infty$.

\noindent Let $ u\in \mathcal{D}_W^{1}L^{p,\infty}(\mathbb R^n)$. Let us
define the auxiliary  function
$$
v(r)=r^{n/p^*}u^{\sharp}(r)
$$
Then
\begin{equation}\label{vinfty}
\|u\|_{p^*, \infty}=\sup_{t>0}
t^{1/p^*}u^*(t)=\omn^{1/p^*}\sup_{r>0}r^{n/p^*}u^\sharp(r)=\omn^{1/p^*}\|v\|_{\infty}
\end{equation}
Since $v$ has finite $L^\infty$  norm, it coincides with the limit
of the $L^\gamma$ norm of $v$, as $\gamma \to +\infty$. For $1<\tilde p <n$
by applying inequality \eqref{SobLorineq-R^n} we have
\begin{align*}
  \|v\|_\gamma^\gamma&=\int_0^{+\infty}(u^\sharp)^\gamma(r)
  r^{\frac{n-p}{p}\gamma}dr=\int_0^{+\infty}\left[u^\sharp(r)r^{\frac{n-p}{p}+\frac{1}{\gamma}}\right]^{\gamma}\frac{dr}{r}\\
  &=\int_0^{+\infty}\left[u^\sharp(r)r^{n/\tilde{p}^*}\right]^{\gamma}\frac{dr}{r}
  =\left[  n\omn^{\gamma/\tilde{p}^*}\right]^{-1}\|u\|_{\tilde p^*,
  \gamma}^\gamma \quad \left(\hbox{where } \frac{n}{\tilde p}=\frac np + \frac 1{\gamma}\right)\\
  &\leq \left[  n\omn^{\gamma/\tilde{p}^*}\right]^{-1}\left[  \frac{\tilde p}{n-\tilde p}\omn^{-1/n}\right]^\gamma
 \|\nabla u\|_{\tilde p, \gamma}^\gamma \\
&= \left[  n\omn^{\gamma/\tilde{p}^*}\right]^{-1}\left[
\frac{\tilde p}{n-\tilde p}\omn^{-1/n}\right]^\gamma
n\omn^{\gamma/\tilde p}
  \int_0^1\left[|\nabla u|^\sharp r^{n/\tilde p}\right]^\gamma
  \frac{dr}{r}\\
  &=\omn^{-\gamma/\tilde p^* + \gamma/\tilde p}\left[\frac{\tilde p}{n-\tilde
  p}\omn^{-1/n}\right]^\gamma\int_0^1\left[|\nabla u|^\sharp r^{n/
  p}\right]^\gamma dr \quad \left(\hbox{since } \frac{n\gamma}{\tilde
  p}-1=\frac{n \gamma}{p}\right)\\
  &= \omn^{-\gamma/\tilde p^* + \gamma/\tilde p}\left[\frac{\tilde p}{n-\tilde
  p}\omn^{-1/n}\right]^\gamma \||\nabla u|^\sharp
  r^{n/p}\|_\gamma^\gamma
\end{align*}
Combining the last inequality with \eqref{vinfty} we obtain
\begin{multline*}
  \|u\|_{p^*, \infty}=\omn^{1/p^*}\|v\|_{\infty}= \omn^{1/p^*}\cdot\lim_{\gamma\to
  +\infty}\|v\|_\gamma\\
  \leq \omn^{1/p^*}\cdot\lim_{\gamma \to +\infty}\omn^{-1/\tilde p^* + 1/\tilde p-1/n}
 \frac{\tilde p}{n-\tilde p} \||\nabla u|^\sharp   r^{n/p}\|_\gamma\\
=\omn^{  1/ p-1/n}
 \frac{ p}{n- p} \||\nabla u|^\sharp
  r^{n/p}\|_\infty
\end{multline*}
since $\tilde p\to p$ and $\tilde p ^*\to p^*$, as $\gamma \to
\infty$. Recalling that
$$
\||\nabla u|^\sharp
  r^{n/p}\|_\infty= \omn^{-1/p}\|\nabla u\|_{p,\infty}
$$
the claim follows.

\subsection{Best constants}\label{sharpness}

\noindent The proof of Theorem
\ref{thm generAlv} will be complete if we prove that the constant $\frac{p}{n-p}\omn^{-1/n}$
appearing in \eqref{SobLorineq-R^n} is sharp for any $1\leq
p<q\leq  \infty$.

\noindent Notice that for $q=+\infty$, the sharpness will be a consequence of the attainability
of the constant which will be proved in Section \ref{att}, hence we consider only the case $q<+\infty$. For this purpose we have to check that the maximizing sequence introduced in
\cite{Alv1} for the case $1\leq q\leq p$ actually works also in the case $p< q \leq +\infty $.

\noindent Consider the radial decreasing function
$$
v_{\varepsilon}(x):=\left\{%
\begin{array}{ll}
   \displaystyle |x|^{-\frac{n-p}{p}+\varepsilon}, & \hbox{if } |x|<1 \\
\\
1-\left(\frac{n-p}{p}-\varepsilon\right)(|x|-1), & \hbox{if } 1\leq |x| < 1+\frac 1{\frac{n-p}{p}-\varepsilon} \\\end{array}%
\right.
$$
whose gradient is given by
$$
|\nabla v_{\varepsilon}(x)|:=\left\{%
\begin{array}{ll}
   \left(\frac{n-p}{p}-\varepsilon\right)\displaystyle |x|^{-\frac{n}{p}+\varepsilon}, & \hbox{if } |x|<1 \\
\\
\left(\frac{n-p}{p}-\varepsilon\right), & \hbox{if } 1\leq |x| < 1+\frac 1{\frac{n-p}{p}-\varepsilon} \\\end{array}%
\right.
$$
which is a decreasing radial function. One has
\begin{equation*}
  \|\nabla v_{\varepsilon}\|_{p,q}^q=n\omn^{\frac qp}\left(\frac{n-p}{p}-\varepsilon\right)^q\frac 1q %
\bigg[\frac{1}{\varepsilon}+\frac{p}{q}\Big(\frac{1}{\frac{n-p}{p}-\varepsilon}+1\Big)^{\frac
np q}-\frac pq\bigg]
\end{equation*}
and
\begin{multline*}
\|v_{\varepsilon}\|_{p^*,q}^q= n\omn^{\frac
q{p^*}}\frac{1}{\varepsilon q}+ n\omn^{\frac
q{p^*}}\int_1^{1+\frac 1{\frac{n-p}{p}-\varepsilon}}\left[1-\left(\frac{n-p}{p}-\varepsilon\right)(r-1)\right]^q%
r^{\frac{n-p}{p}q}\frac{dr}{r}\\
\leq n\omn^{\frac q{p^*}}\frac{1}{\varepsilon q}+ n\omn^{\frac
q{p^*}}\int_1^{1+\frac
1{\frac{n-p}{p}-\varepsilon}}r^{\frac{n-p}{p}q}\frac{dr}{r}\\
= n\omn^{\frac q{p^*}}\frac{1}{\varepsilon q}+ n\omn^{\frac
q{p^*}}\frac{pq}{n-p}\bigg[\Big(\frac{1}{\frac{n-p}{p}-\varepsilon}+1\Big)^{\frac{n-p}p
q}-1\bigg]
\end{multline*}
so that
\begin{multline*}
\frac{\|\nabla
v_{\varepsilon}\|_{p,q}^q}{\|v_{\varepsilon}\|_{p^*,q}^q}\geq
\omn^{\frac qn}
\frac{\left(\frac{n-p}{p}-\varepsilon\right)^q\frac 1q %
\bigg[\frac{1}{\varepsilon}+\frac{p}{q}\Big(\frac{1}{\frac{n-p}{p}-\varepsilon}+1\Big)^{\frac
np q}-\frac pq\bigg]}{\frac{1}{\varepsilon q}+
\frac{pq}{n-p}\bigg[\Big(\frac{1}{\frac{n-p}{p}-\varepsilon}+1\Big)^{\frac{n-p}p
q}-1\bigg]}\longrightarrow \omn^{\frac qn}
\Big(\frac{n-p}{p}\Big)^q
\end{multline*}
as $\varepsilon\to 0$, which is our thesis.

\section{Proof of Theorem \ref{thm equivalence}}\label{weaktostrong}
\noindent As byproduct of Theorem \ref{thm
generAlv} we have proved the implication $(H_p)\Rightarrow
(A_{p,\infty})$. We next prove the converse.
\par \smallskip
\noindent Suppose that $(A_{p,\infty})$ holds, namely
\begin{equation}\label{weakineq}
\|v\|_{p^\ast, \infty}\leq \frac{p}{n-p}\omn^{-\frac 1n}\|\nabla
v\|_{p, \infty}, \qquad v \in \mathcal
D_W^1L^{p,\infty}(\mathbb R^n)
\end{equation}
Then we want to prove Hardy's inequality $(H_p)$ for any
function $u \in \mathcal D^{1,p}$. Actually, thanks to the P\'olya-Szeg\"o
and Hardy-Littlewood inequalities, we may restrict ourselves to
prove the validity of $(H_p)$ for any  $u\in \mathcal D^{1,p,
\sharp}$, that is by density, the class of radially decreasing
Lipschitz function with compact support, such that $\|\nabla u\|_p<+\infty$. By Proposition
\ref{dilation_inv}, we may also assume that $u$ has support in $B_1$, the unit ball centered at the origin.

\noindent Let us define an auxiliary radial function $v$ as follows
$$
v(r)=\int_{r}^{1}\rho^{-\frac
np}\int_{\rho}^{1}|u'(t)|^pt^{n-1}dt\,d\rho
$$
Then $v\in \mathcal C^1(B_1\setminus \{0\})$ and
$$
v'(r)= -\rho^{-\frac np}\int_{r}^1|u'(t)|^pt^{n-1}dt,\qquad
v(1)=0, \,\, \lim_{r\to 0^+}v(r)=+\infty
$$
so that $v$ is radially decreasing and also $|v'|=|\nabla v|$ is
radially decreasing.

\noindent Hence
\begin{multline} \label{weaknorm_nablav}
\|\nabla v\|_{p,\infty}=\omn^{\frac 1p}\sup_{B_1}|\nabla
v|^{\sharp}|x|^{\frac np}= \omn^{\frac 1p}\sup_{0<r<1}|v'|r^{\frac
np}
\\= \omn^{\frac 1p}\int_0^1|u'|^{p}d\rho= \omn^{\frac
1p-1}\|\nabla u\|_p^p
\end{multline}
Moreover, $v\in L^{p^*, \infty}$ since
\begin{multline*}
 \|v\|_{p^*, \infty}=\omn^{\frac
1{p^*}}\sup_{0<r<1}|v|r^{\frac{n-p}p}= \omn^{\frac
1{p^*}}\sup_{0<r<1}r^{\frac{n-p}p} \int_{r}^{1}\rho^{-\frac
np}\int_{\rho}^{1}|u'(t)|^pt^{n-1}dt\,d\rho\\
= \omn^{\frac 1{p^*}}\sup_{0<r<1}r^{\frac{n-p}p}
\int_{r}^{1}|u'(t)|^pt^{n-1}dt\int_{r}^{t}\rho^{-\frac
np}\,d\rho\\=\frac{p}{n-p} \omn^{\frac 1{p^*}}
\sup_{0<r<1}r^{\frac{n-p}p}
\int_{r}^{1}|u'(t)|^pt^{n-1}\left(r^{-\frac{n-p}{p}}-t^{-\frac{n-p}{p}}\right)dt\\
\leq \frac{p}{n-p} \omn^{\frac 1{p^*}} \sup_{0<r<1}
\int_{r}^{1}|u'(t)|^pt^{n-1}dt = \frac{p}{n-p} \omn^{\frac
1{p^*}-1} \|\nabla u\|_p^p<\infty
\end{multline*}
thus we have proved $v\in \mathcal D^{1}_WL^{p, \infty}(\mathbb{R}^n)$.

\noindent Now the idea is to estimate from below the norm $\|v\|_{p^*, \infty}$
with the left hand side of Hardy's inequality
 involving the function $u$. Since
$$
\|v\|_{p^*, \infty}=\omn^{\frac
1{p^*}}\sup_{0<r<1}|v|r^{\frac{n-p}p}
$$
we have to estimate the quantity
$$
 |v|r^{\frac{n-p}p}=r^{\frac{n-p}p}\int_{r}^1\rho^{-\frac
np}\int_{\rho}^1|u'(t)|^pt^{n-1}dt\,d\rho
$$
from below with Hardy's integral involving $u$. Since $-u'(t)=|u'(t)|$, we have
\begin{multline}\label{u^p}
  u^p(\rho)=\left[\int_{\rho}^1|u'|dt\right]^p
  = \left[\int_{\rho}^1|u'|t^{\frac{p-1}{p}\frac np}t^{-\frac{p-1}{p}\frac
np}dt\right]^p\\
\leq
\int_{\rho}^1|u'|^pt^{\frac{p-1}{p}n}dt\left[\int_{\rho}^1t^{-\frac
np}\right]^{p-1}\\\leq
\left(\frac{p}{n-p}\right)^{p-1}\rho^{-\frac{n-p}{p}\,(p-1)}\int_{\rho}^1|u'|^pt^{\frac{p-1}{p}n}dt\ .
\end{multline}
A key step in the proof is now the evaluation of the following
limit, obtained applying de l'H\^opital's theorem (note that we have an indefinite form ${\infty}/{\infty}$, since
$n>p$):
\begin{eqnarray*}
&&\quad\lim_{r\to 0^+}r^{\frac{n-p}{p}}\int_r^1 z^{-\frac np} \int_z^1
u^p(\rho)\rho^{n-p-1}d\rho dz\\
&&\\
&& = \lim_{r\to 0^+}\frac{\int_r^1
z^{-\frac np} \int_z^1 u^p(\rho) \rho^{n-p-1}d\rho dz}{r^{-\frac{n-p}{p}}}\\
&&\\
&&= \frac{p}{n-p} \lim_{r\to 0^+}\frac{ r^{-\frac np} \int_z^1
u^p(\rho)\rho^{n-p-1}d\rho dz}{r^{-\frac{n}{p}}}\\
&&= \frac{p}{n-p}
\int_0^1 u^p(\rho)\rho^{n-p-1}d\rho
\end{eqnarray*}
and thanks to \eqref{u^p}, we have
\begin{multline*}
  \int_0^1 u^p(\rho)\rho^{n-p-1}d\rho=\frac{n-p}{p} \lim_{r\to 0^+}r^{\frac{n-p}{p}}\int_r^1 z^{-\frac np} \int_z^1
u^p(\rho)\rho^{n-p-1}d\rho dz\\
\leq \frac{n-p}{p} \sup_{0<r<1}r^{\frac{n-p}{p}}\int_r^1 z^{-\frac
np} \int_z^1
u^p(\rho)\rho^{n-p-1}d\rho dz \\
\leq  \left(\frac{p}{n-p}\right)^{p-2}\sup_{0<r<1}
r^{\frac{n-p}{p}}\int_r^1 z^{-\frac np} \int_z^1
\rho^{-\frac{n-p}{p}\,(p-1)}\rho^{n-p-1}\int_{\rho}^1|u'|^pt^{\frac{p-1}{p}n}dtd\rho dz\\
= \left(\frac{p}{n-p}\right)^{p-2}\sup_{0<r<1}
r^{\frac{n-p}{p}}\int_r^1 z^{-\frac np} \int_z^1
\rho^{\frac{n}{p}-2}\int_{\rho}^1|u'|^pt^{\frac{p-1}{p}n}dtd\rho dz\ .
\end{multline*}
By Fubini's theorem, we reverse the order of integration in the last integral to get
\begin{multline*}
\left(\frac{p}{n-p}\right)^{p-2}\sup_{0<r<1}
r^{\frac{n-p}{p}}\int_r^1 z^{-\frac np} \int_z^1
|u'|^pt^{\frac{p-1}{p}n}
\int_z^t\rho^{\frac{n}{p}-2}d\rho dt dz\\
\leq\left(\frac{p}{n-p}\right)^{p-1}\sup_{0<r<1}
r^{\frac{n-p}{p}}\int_r^1 z^{-\frac np} \int_z^1
|u'|^pt^{\frac{p-1}{p}n}t^{\frac{n}{p}-1}\rho dt \\
=\left(\frac{p}{n-p}\right)^{p-1}\sup_{0<r<1} r^{\frac{n-p}{p}}v(r)
\end{multline*}
where we have used the fact $\frac{p-1}{p}n+\frac{n}{p}-1=n-1$.

\noindent We conclude the proof by applying the embedding inequality \eqref{weakineq},
\begin{eqnarray*}
\int_{\mathbb R^n}\frac{u^p}{|x|^p}dx&=&\omn \int_0^1 \int_0^1
u^p(\rho)\rho^{n-p-1}d\rho\\
&\leq&
\omn\left(\frac{p}{n-p}\right)^{p-1}\sup_{0<r<1}
r^{\frac{n-p}{p}}v(r)\\
&=& \omn\left(\frac{p}{n-p}\right)^{p-1}\omn^{-\frac 1{p^*}
}\|v\|_{p^*, \infty}\\
&\leq& \omn^{1-\frac 1{p^*}}\left(\frac{p}{n-p}\right)^p\omn^{-\frac
1n}\|\nabla v\|_{p, \infty} \\
&=& \omn^{1-\frac
1p}\left(\frac{p}{n-p}\right)^p\|\nabla v\|_{p, \infty}\\
&=&\left(\frac{p}{n-p}\right)^p\|\nabla u\|_p^p
\end{eqnarray*}
thus Hardy's inequality.

\section{Proof of Theorem \ref{thm attainedornot}}\label{att}

\noindent Here we discuss the attainability of the
sharp embedding constant in \eqref{SobLorineq-R^n}. Observe that for $q=+\infty$, the best embedding
constant in $(A_{p, \infty})$ is attained by the
 function $\psi=|x|^{-\frac{n-p}{p}}$, which is radially decreasing
together with the gradient $|\nabla \psi|=\frac{n-p}{p}|x|^{n/p}$. Hence
 $$
 \|\psi\|_{p^*,
 \infty}=\omn^{1/p^*}\sup_{r>0}r^{-\frac{n-p}{p}}\cdot r^{\frac
 n{p^*}}= \omn^{1/p^*}
 $$
 and
$$
 \|\nabla \psi\|_{p, \infty}=\omn^{1/p}\frac{n-p}{p}\sup_{r>0}r^{-\frac{n}{p}}\cdot
 r^{\frac  n{p}}= \omn^{1/p}\frac{n-p}{p}\ .
 $$
Note that actually we have a whole family of extremal functions, due to the
invariance by dilation proved in Proposition \ref{dilation_inv}.
Moreover, there are plenty of
maximizers for  $(A_{p, \infty})$ in $\mathcal D^1_WL^{p^*,q}(\mathbb{R}^n)$,
since it is enough to have a local asymptotic behavior as $\psi$.

\noindent We next consider the case $p<q<+\infty$. We
will argue by contradiction, proving that the sharp embedding constant is
never attained. Let us suppose the
inequality \eqref{SobLorineq-R^n} is attained at some $q<\infty$.  Following the lines of
Section \ref{H_implies_A}, we have at least one radially decreasing maximizer  $u\in \mathcal
D^1L^{p,q}(\mathbb{R}^n)$, namely a function $u$ such that
\begin{equation}\label{extremal}
\|u\|_{p^*,q}=\frac{p}{n-p}\omn^{-\frac 1n}\|\nabla u\|_{p,q}\ .
\end{equation}
Next define
$$
v(x):=\left[u(|x|^{\f pq})\right]^{\f
qp}=\left[u^*(|x|^{\f{np}q}\omn)\right]^{\f qp}
$$
so that
$$
v^{\sharp}(|x|)=\left[u(|x|^{\f pq})\right]^{\f qp}, \quad
v^*(t)=u^{\sharp}\left((\textstyle{\frac{t}{\omn}})^{1/n}\right)%
=\left[u\left((\textstyle{\frac{t}{\omn}})^{\f{p}{qn}}\right)\right]^{\f
qp}=\left[v^*(t^{\f pq}\omn^{\f{q-p}{q}})\right]^{\f qp}\
$$
By \eqref{vnorm2}, one has $v\in L^{p^*,p}(\mathbb{R}^n)$ with
$$
   \|v\|_{p^*,p}= \left(\f qp\right)^{\f 1p}\omn^{\f{p-q}{p p^*}}\|u\|^{\f qp}_{p^*,q}%
$$
and $\nabla v\in L^p$, with
$$
 \|\nabla v\|_{p}\leq \left(\f qp\right)^{\f 1p}
\omn^{-\f{(n-p)(q-p)}{np^2}}\|u\|_{p^*,q}^{\f{q-p}{p}}\cdot\|\nabla
u\|_{p,q}\ .
$$
By \eqref{extremal} we obtain
$$
\|\nabla v\|_{p}\leq \left(\f qp\right)^{\f 1p}
\left(\frac{p}{n-p}\right)^{\frac{q-p}{p}}
\omn^{-\f{(q-p)}{p^2}}\|\nabla u\|_{p,q}^{\f{q}{p}}
$$
and in turn
\begin{multline*}
 \|v\|_{p^*,p}= \left(\f qp\right)^{\f 1p}\omn^{\f{p-q}{p p^*}}\|u\|^{\f
 qp}_{p^*,q}=\left(\f qp\right)^{\f 1p}\left(\frac{p}{n-p}\right)^{\frac qp}\omn^{-\frac 1n -\frac {q-p}{p^2}}\|\nabla u\|^{\frac qp}_{p,q}\\%
 \geq \left(\f qp\right)^{\f 1p}\left(\frac{p}{n-p}\right)^{\frac qp}\omn^{-\frac 1n -\frac {q-p}{p^2}}%
 \left(\f qp\right)^{-\f 1p}
\left(\frac{p}{n-p}\right)^{-\frac{q-p}{p}} \omn^{\f{(q-p)}{p^2}}
\|\nabla v\|_{p}\\
=\frac{p}{n-p}  \omn^{-\frac 1n}\|\nabla v\|_{p}\ .
\end{multline*}
This directly implies
$$
 \|v\|_{p^*,p}= \frac{p}{n-p}  \omn^{-\frac 1n}\|\nabla v\|_{p}
$$
and, since $v=v^\sharp$ one has
$$
\int_{\mathbb R^n} \frac{v^p}{|x|^p}\ dx = \frac{p}{n-p}
\int_{\mathbb R^n} |\nabla v|^p dx
$$
which  contradicts the non-attainability of Hardy's inequality.

\appendix
\section{Reduction to the radial case}
\noindent Here we follow \cite{ALT}. Let $u\in \mathcal D^1L^{p,q}$, $u\neq
0$, smooth and compactly supported. Notice that because of the invariance by dilation, we can also prescribe the measure of the
support. We aim at proving that there exists $v\in
\mathcal{D}^{1,\sharp}L^{p,q}$ such that $\|v\|_{p^*,q}\geq
\|u\|_{p^*,q}$ and $\|\nabla v\|_{p,q}\leq \|\nabla
u\|_{p,q}$. This yields the following maximization problem
$$
  \max\left\{\|v\|_{p^*, q} : v\in W_0^1L^{p,q}(\Omega), |\nabla v| \leq f \in L^{p,q} \,\,\hbox{ a.e. in
}\Omega, f^*=|\nabla u|^*\right\}\geq \|u\|_{p^*,q}
$$
(the last inequality is trivial: set $v\equiv u, f\equiv|\nabla u|$).
It is known that for any $f\geq 0$, $f\in L^{p,q}(\Omega)$ there
exists a maximal nonnegative sub-solution $v\in
W^1L^{p,q}_0(\Omega)$ of the problem
\begin{equation}\label{sub}
|\nabla v| \leq f
\end{equation}
(see \cite{Li} Prop.~7.2,  p.~164 where the statement is proved for $f\in W_0^{1,p}$
but it can be generalized thanks to the monotonicity of the
decreasing rearrangement).

\noindent Consider the maximization
problem
$$
I(u)=\{\sup \|v\|_{p^*,q} :  v\hbox{ enjoys
\eqref{sub}, } f\geq 0, f\in L^{p,q} \hbox{ and } f^\sharp=|\nabla
u|^\sharp\}\geq \|u\|_{p^*,q}
$$
It was proved in \cite{GN} that if $v$ satisfies \eqref{sub}, with
$f\in L^{p,q}$ such that $f^\sharp=|\nabla u|^\sharp$, then
$$
v^*(t)\leq \frac{1}{n\omn^{1/n}}\int_t^{|\Omega|}
s^{1/n}F(s)\frac{ds}s
$$
for some positive
$
F\in
L^{p,q}(0, |\Omega|)$ such that $F(\omn |x|^n)\prec |\nabla u|^\sharp
(|x|)
$
where, we recall
$$
f\prec g \,\, \Longleftrightarrow \,\, \left\{
\begin{array}{ll}
   \displaystyle \int_0^t f^\ast(s)ds \leq \int_0^t g^\ast(s)ds , & \hbox{ for }  t\in [0, |\Omega|] \\
 \\
\displaystyle \int_0^{|\Omega|} f^\ast(s)ds = \int_0^{|\Omega|} g^\ast(s)ds &  \\
\end{array}%
\right.
$$
The relation $f\prec g$ it is known as the Hardy-Littlewood-Polya
relation between $f$ and $g$. In particular one has
$f^{**}(t)\leq g^{**}(t)$ for any $t$ (see \cite{GN,BS, ALT} for
the definition of $\prec$ and its properties). It turns
out that
$$
I(u)\leq J(u)
$$
where $J(u)$ is the following relaxed maximization problem
\begin{multline*}
J(u)=\Big\{\sup \|w\|_{p^*,q} :
w(|x|)=\frac{1}{n\omn^{1/n}}\int_{\omn |x|^n}^{|\Omega|}
F(s)s^{1/n}\frac {ds}s,\\
F\in L^{p,q}(0,|\Omega|): F(\omn |x|^n)\prec |\nabla
u|^\sharp (|x|), F\geq 0\Big\}
\end{multline*}
By direct calculations we have
\begin{equation}\label{redutc1}
\|w\|_{p^*,q}\leq C_{n,q} \| F\|_{p,q}\leq C\|\nabla u\|_{p,q}
\end{equation}
\noindent Consider the following class
$$
K(|\nabla u|^\sharp):=\{F\in L^{p,q}(0, |\Omega|): F\geq 0, F(\omn
|x|^n)\prec |\nabla u|^\sharp(|x|)\}
$$
for which  the following properties are proved in \cite{ALT}:
\begin{itemize}
\item \ $K(|\nabla u|^\sharp)$ is a convex, weakly compact and
closed set
in $L^{p,q}(0, |\Omega|)$;%
\item \  $K(|\nabla u|^\sharp)$ is the weak closure, in $L^{p,q}(0,
|\Omega|)$, of the
set of positive functions $f$ such that $f^\sharp=|\nabla u|^\sharp$;%
\item \ any extreme point of $K(|\nabla u|^\sharp)$ (namely, any $F$
such that do not exist $F_1, F_2 \in K, F_1 \neq F_2$,
for which $F=\frac{F_1+F_2}{2}$) is equi-measurable with $|\nabla
u|$ with $F^*= |\nabla u|^*$.
\end{itemize}
 (actually the result in \cite{ALT} is proved in $L^p(\Omega)$, but it
can be straightforward generalized).

\noindent Thanks to the previous properties,
if $w_j$ is any maximizing sequence for $J(u)$, then $|\nabla
w_j(|x|)|=F_j(\omn |x|^n)$ is uniformly bounded in
$L^{p,q}(\Omega)$ (or, equivalently, $F_j(s)$ is uniformly bounded
in $L^{p,q}(0,|\Omega|)$) so that $F_j(s)$  converges weakly in
$L^{p,q}(0, |\Omega|)$ to some $F_0(s)\in K(|\nabla u|^\sharp)$,
and $F_j(\omn |x|^n)$  converges weakly in $L^{p,q}(\Omega)$ to
$F_0(\omn |x|^n)$. As a consequence, up to a subsequence,
$\{w_j\}$ converges pointwise if $x\neq 0$, and also weakly into
$L^{p^*,q}(\Omega)$, to the associated function $w_0\in L^{p^*,q}(\Omega)$:
$$
w_j(|x|)\stackrel{L^{p^*,q}(\Omega)}{\rightharpoonup}
w_0(|x|)=\frac{1}{n\omn^{1/n}}\int_{\omn |x|^n}^{|\Omega|}
F_0(s)s^{1/n}\frac {ds}s
$$
%Note that $\|\cdot\|_{p^*,q}$ is not a norm, but only a quasi-norm
%(equivalent to a true norm) if $q>p$, so that, in general, it is
%not weakly lower semicontinuous (we have $\|w_0\|_{p^*,q}\leq
%C\cdot \liminf_j \|w_j\|_{p^*,q} , C>1$).
Once we prove that $w_j\to w_0$ in $L^{p^*,q}(\Omega)$, then $w_0$ will
be  a maximum point for $J(u)$, and hence an extreme point of
$K(|\nabla u|^{\sharp})$. As a consequence, $|\nabla
w_0|^{\sharp }(|x|) \equiv F^*_0 (\omn |x|^n)= |\nabla u|^\sharp
(|x|)$ and thus $|\nabla w_0|^*(s)\equiv F^*_0(s)= |\nabla
u|^* (s)$, and our claim follows:
$$
w_0\in \mathcal D^{1,\sharp}L^{p,q}: \quad
\|w_0\|_{p^*,q}=J(u)\geq I(u)\geq \|u\|_{p^*,q}, \quad \|\nabla
w_0\|_{p,q}=\|\nabla u\|_{p,q}
$$
\noindent In order to prove the strong convergence of $w_j$ in $L^{p^*,q}(\Omega)$,
let us focus on its gradient $F_j(\omn |x|^n)$ whose Lorentz quasi-norm is given by
$$
\|\nabla w_j\|_{L^{p,q}(\Omega)}^q=\|F_j\|_{L^{p,q}(0,
|\Omega|)}^q=\int_0^{|\Omega|}\left[ F_j^{*}t^{\frac
1p}\right]^q\frac{dt}{t}
$$
\noindent Let us recall the maximal function defined for a measurable function $f$ as follows
$$f^{**}(t):=\frac{1}{t}\int_0^tf^*(s)\, ds$$
which defines an equivalent norm in $L^{p,q}(\Omega)$, as long as $p>1$, by
$$|||f|||_{p,q}=\|t^{\frac{1}{p}-\frac{1}{q}}f^{**}(t)\|_{L^q(0,|\Omega|)}$$
Since $F_j(\omn |x|^n)\prec |\nabla u|^\sharp (|x|)$, we have
$$
F_j^{*}(t)\leq F_j^{**}(t)\leq |\nabla u|^{**}(t), \qquad
(F^*_j)^{q}(t)=(F^q_j)^{*}(t)\leq  (F_j^q)^{**}(t)\leq (|\nabla
u|^q)^{**}(t),
$$
and hence
$$
(F_j^q)^{*}(t)t^{\frac qp-1}\leq (|\nabla u|^q)^{**}(t) t^{\frac
qp-1} \in L^{1}(0, |\Omega|),  \quad \hbox{since } |\nabla u|\in
L^{p,q }
$$
On the other hand, since
$F_j(s)$ converges weakly to $F_0(s)$ in  $L^{p,q}(0, |\Omega|)$,
we get
$$
\int_0^{|\Omega|}F_j^*(t)dt= \int_0^{|\Omega|}F_j^*(t)\cdot 1
dt\longrightarrow \int_0^{|\Omega|}F_0^*(t)dt $$
so that, up to a subsequence, $F_j^*(t)$ converges in measure and
a.e. to $F_0^*(t)$. We can then apply the Lebesgue dominated
convergence theorem to the sequence $(F_j^q)^{*}(t)t^{\frac
qp-1}$, obtaining
$$
\|F_j\|_{L^{p,q}(0, |\Omega|)}\longrightarrow \|F_0\|_{L^{p,q}(0,
|\Omega|)}
$$
and eventually $|\nabla w_j|=F_j(\omn |x|^n)\to |\nabla w_0|=
F_0(\omn |x|^n)$ strongly in $L^{p,q}(\Omega)$. From the embedding
$W_0^1L^{p,q}\hookrightarrow L^{p^*,q}$, we
have $w_j\to w_0$ strongly in $L^{p^*,q}(\Omega)$.
\par \bigskip

\end{document}